   \newtheorem{theo}{Theorem}[section]
   \newtheorem{lemm}[theo]{Lemma}
   \newtheorem{coro}[theo]{Corolary}
	\theoremstyle{remark}
	\newtheorem{rema}[theo]{Remark}
   \def\dowod{\par\noindent{\sc Proof.}~}
   \def\cbdu{\hfill{$\Box$}}
   \def \sgn{\text{sgn\,}}
   \def \div{ \nabla \cdot}
   \def \Z{\mathbb Z}
   \newcommand{\dx}{\,\text dx}
   \newcommand{\dy}[1]{\,\text d#1}
	\newcommand\R{\mathbb{R}}
\numberwithin{equation}{section}
\numberwithin{theo}{section}
\begin{document}

%%%%%%%%%%%%%%%%%%%%%%%%%%%%
\title{Asymptotics behaviour in one dimensional model of interacting particles}

\author{Rafa\l\ Celi\'nski}
\address{
 Instytut Matematyczny, Uniwersytet Wroc\l awski,
 pl. Grunwaldzki 2/4, 50-384 Wroc\-\l aw, POLAND}
\email{Rafal.Celinski@math.uni.wroc.pl}
%\urladdr{http://www.math.uni.wroc.pl/~karch}

\date{\today}

\thanks{
Author was supported by the International Ph.D.
Projects Programme of Foundation for Polish Science operated within the
Innovative Economy Operational Programme 2007-2013 funded by European
Regional Development Fund (Ph.D. Programme: Mathematical
Methods in Natural Sciences) and by the MNiSzW grant No. N N201 418839. This is a part of the author Phd dissertation written under supervision of Grzegorz Karch.
}

\begin{abstract} 
We consider the equation $u_t=\varepsilon u_{xx}+(u\ K'*u)_x$ for  $x\in\mathbb{R}$, $t>0$ and with $\varepsilon\geq 0$, supplemented with a nonnegative, integrable initial datum. We present a class of interaction kernels $K^\prime$ such that the large time behaviour of solutions to this initial value problem is described by a compactly supported self-similar profile.
\end{abstract}

%%%%%%%%%%%%%%%%%%%%%%%%%%%%%%%%%%%%%%%%%%%%%%%%%%%%%%%%%%%%%%%%%%%%%
\keywords{}
%%%%%%%%%%%%%%%%%%%%%%%%%%%%%%%%%%%%%%%%%%%%%%%%%%%%%%%%%%%%%%%%%%%%%
\bigskip

\subjclass[2000]{ 35K15, 35B40, 92C17}
%%%%%%%%%%%%%%%%%%%%%%%%%%%%%%%%%%%%%%%%%%%%%%%%%%%%%%%%%%%%%%%%%%%%%

\maketitle

%%%%%%%%%%%%%%%%%%%%%%%%%%%%%%%%%%%%%%%%%%%%%%%%%%%%%%%%%%%%

%%%%%%%%%%%%%%%%%%%%%%%%%%%%%%%%%%%%%%%%%%%%%%%%%%%%%%%%%%%%%%%%%%%%%%%%%%%%%%%%%%%%%%%
\section{Introduction}
%%%%%%%%%%%%%%%%%%%%%%%%%%%%%%%%%%%%%%%%%%%%%%%%%%%%%%%%%%%%%%%%%%%%%%%%%%%%%%%%%%%%%%%

We study the asymptotic behaviour of solutions to the one-dimensional initial value problem
\begin{align}
&u_t = \varepsilon u_{xx} + \left(u\ K^\prime \ast u\right)_{x} \quad \text{for}\ x
 \in \R,\ t > 0, \label{aggr}\\
&u(x, 0) = u_0 (x)\quad \text{for}\ x \in \R, \label{aggr-ini}
\end{align}
where the {\it interaction} kernel $K^\prime$ is a given function, an initial datum $u_0 \in L^1 (\R)$ is nonnegative and $\varepsilon\geq 0$.

Equation \eqref{aggr} arises in study of an animal aggregation as well as in some problems in mechanics of continuous media. The unknown function $u=u(x,t)$ represents either the population density of a species or, in the case of materials applications, a particle density. The kernel $K^\prime$ in \eqref{aggr} can be understood as the derivative of a certain function $K$, that is, $K^\prime$ stands for $\dy{K}/\dx$. We use this notation to emphasise that the cell interaction described by equation \eqref{aggr} takes place by means of a potential $K$. Moreover, our assumptions on interaction kernel $K^\prime$ imply that equation \eqref{aggr} describe particles interacting according to a repulsive force (this will be clarified bellow).

Let us first notice that the one-dimensional parabolic-elliptic system of chemotaxis
\begin{align}\label{chemo}
u_t=\varepsilon u_{xx}-(uv_x)_x,\qquad -v_{xx}+v=u, \qquad x\in\mathbb{R},\ \ t>0
\end{align}
can be written as equation \eqref{aggr}. Indeed, if we put $K(x)=-\frac{1}{2}e^{-|x|}$ into the \eqref{aggr}, which is the fundamental solution of the operator $\partial^2_x-$Id, one can rewrite the second equation of \eqref{chemo} as $v=-K*u$. Here, however, we should emphasise that, below we consider repulsive phenomena, where the interaction kernel has the opposite sign, see Remark \ref{remark2} for more details.

This work is motivated by the recent publication by Karch and Suzuki \cite{KS10} where the authors study the large time asymptotics of solutions to \eqref{aggr}-\eqref{aggr-ini} under the assumption $K^\prime\in L^1(\R)$. They showed that either the fundamental solution of the heat equation or a nonlinear diffusion wave appear in the asymptotic expansion of solutions as $t\to\infty$. Analogous results on the solutions to the one dimensional chemotaxis model \eqref{chemo} can be found in \cite{NSU03,NY07}. Here, we would like to point out that, in all those results, a diffusion phenomena play a pivotal role in the large time behaviour of solutions to problem \eqref{aggr}-\eqref{aggr-ini}.

The main goal of this work is to show that for a large class of interaction kernels $K^\prime\in L^\infty(\R)\backslash L^1(\R)$, the diffusion is completely negligible in the study of the large time asymptotics of solutions. Let us be more precise. Our assumption on the interaction kernel imply that $K^\prime(x)$ is sufficiently small perturbation of the function $-\frac{A}{2}H(x)$, where, $A\in(0,\infty)$ is a constant and $H$ is the classical sign function given by the formula: $H(x)=-1$ for $x<0$ and $H(x)=1$ for $x>0$ ({\it cf.} Remark \ref{remark1}). Under these assumptions, we show that for large values of time, a solution of problem \eqref{aggr}-\eqref{aggr-ini} looks as a compactly supported self-similar profile, defined as the space derivative of a rarefaction wave, {\it i.e.} the solution of the Riemann problem for the nonviscous Burgers equation $u_t+Auu_x=0$ (see Corollary \ref{corolary} for the precise statement).

In our reasoning, first, we consider $\varepsilon>0$, and our result on the large time behaviour are, in some sense, independent of $\varepsilon$. Next, we pass to the limit $\varepsilon\to 0$ to obtain an analogous result for the inviscid aggregation equation $u_t-(u\ K^\prime*u)_x=0$. In particular, our assumptions imply that weak, nonnegative solutions to the initial value problem for this inviscid equation exists for all $t>0$.

To conclude this introduction, we would like to recall, that the multidimensional inviscid aggregation equation $u_t-\div(u\nabla K*u)=0$ was derived as a macroscopic equation from the so-called ``individual cell-based mode'' \cite{BV05,S00}, namely, as a continuum limit for a system of particles $X_k(t)$ placed at the point $k$ in time $t$ and evolving by the system of differential equations:
\begin{align*}
\frac{dX_k(t)}{dt}=-\sum_{i\in\Z\backslash\{k\}}\nabla K(X_k(t)-X_i(t)),\quad k\in\Z
\end{align*}
where $K$ is the potential. Results on the local and global existence as well as the blow-up of solutions of this inviscid aggregation equation one can find in \cite{BCL09,BH10,BL07,L07} and in references therein.

\subsection*{Notation}
In this work, the usual norm of the Lebesgue space $L^p (\R)$ with respect to the spatial variable is denoted by $\|\cdot\|_p$ for any $p \in [1,\infty]$ and $W^{k,p}(\R)$ is the corresponding Sobolev space. The set $C^\infty_c (\R)$ consist of smooth and compactly supported functions. Moreover $(f*g)(x)$ denotes the usual convolution, {\it i.e.} $(f*g)(x)=\int_\R f(x-y)g(y)\dy{y}$. The letter $C$ corresponds to a generic constants (always independent of $x$ and $t$) which may vary from line to line. Sometimes, we write,  {\it e.g.}  $C=C(\alpha,\beta,\gamma, ...)$ when we want to emphasise the dependence of $C$ on parameters~$\alpha,\beta,\gamma, ...$.

%%%%%%%%%%%%%%%%%%%%%%%%%%%%%%%%%%%%%%%%%%%%%%%%%%%%%%%%%%%%%%%%%%%%%%%%%%%%%%%%%%%%%%
\section{Main results} \label{sec2}
%%%%%%%%%%%%%%%%%%%%%%%%%%%%%%%%%%%%%%%%%%%%%%%%%%%%%%%%%%%%%%%%%%%%%%%%%%%%%%%%%%%%%%%

We begin our study of large time behaviour of solution by recalling that, for $\varepsilon>0$, the initial value problem \eqref{aggr}-\eqref{aggr-ini} is known to have a unique and global-in-time solution for a large class of initial conditions $u_0$ and interaction kernels $K^\prime$. Such results are more-or-less standard and the detailed reasoning can be found in \cite{KS09}. In particular, our assumptions (see Theorem \ref{thm_lp_decay} below) imply that $K^\prime\in L^\infty(\R)$, hence the kernel $K^\prime$ is mildly singular in the sense stated in \cite[Thm 2.5]{KS09}. In this case, results from \cite{KS09} can be summarised as follows: for every $u_0\in L^1(\R)$ such that $u_0\geq 0$, there exists the unique global-in-time solution $u$ of problem \eqref{aggr}-\eqref{aggr-ini} satisfying
\begin{align*}
u\in C\left([0,+\infty),\ L^1(\R)\right)\cap C\left((0,+\infty),\ W^{1,1}(\R)\right)\cap C^1\left((0,+\infty),\ L^1(\R)\right).
\end{align*}
In addition, the condition $u_0(x)\geq 0$ implies $u(x,t)\geq 0$ for all $x\in\R$ and $t\geq 0$. Moreover we obtain the conservation of the $L^1$-norm of nonnegative solutions:
\begin{align}\label{mass}
\|u(t)\|_{L^1}=\int_\R u(x,t)\dx = \int_\R u_0(x)\dx = \|u_0\|_{L^1}.
\end{align}
In Theorem \ref{thm_eps} below, we pass to the limit $\varepsilon\to 0$, to obtain nonnegative weak solutions of problem \eqref{aggr}-\eqref{aggr-ini} with $\varepsilon=0$, for which the conservation of mass \eqref{mass} holds true, as well.

The goal of this work is to study the large time behaviour of solution to \eqref{aggr}-\eqref{aggr-ini}. First, we state conditions under which these solutions decay as $t\to\infty$.

\begin{theo}[Decays of $L^p$ norm]\label{thm_lp_decay}
Assume that $u=u(x,t)$ is a nonnegative solution to problem \eqref{aggr}-\eqref{aggr-ini} with $\varepsilon>0$, where the interaction kernel has the form $K^\prime(x)=-\frac{A}{2}H(x)+V(x)$, where $H$ is the sign function, $A>0$ is a constant, and the function $V$ satisfies
\begin{align}
&V\in W^{1,1}(\R) \text{ with } \|V_x\|_{L^1}<A\label{K_2}.
\end{align}
Suppose also that $u_0 \in L^1 (\R)$ is nonnegative. Then for every $p\in[1,\infty]$ the following inequality hold true
\begin{equation}\label{eq_lp_decay}
||u(t)||_p\leq \left(A-||V_x||_1\right)^\frac{1-p}{p} ||u_0||_1^{1/p}\ t^\frac{1-p}{p}
\end{equation}
for all $t>0$.
\end{theo}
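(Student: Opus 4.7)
The strategy is a classical energy estimate for $\|u(t)\|_p^p$ that, after an interpolation against the conserved mass $\|u_0\|_1$, reduces to a Bernoulli-type differential inequality whose explicit solution yields the claimed time decay. Because we discard the (nonpositive) diffusive dissipation, the constant in the final bound is independent of $\varepsilon$.

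\emph{Step 1 (energy identity).} Multiply equation \eqref{aggr} by $p u^{p-1}$ and integrate in $x$. The diffusion term produces $-\varepsilon p(p-1)\int u^{p-2}u_x^2\,dx$, which is $\le 0$ and will simply be dropped. For the aggregation term I integrate by parts twice, using $p u^{p-1}u_x=(u^p)_x$ and the fact that $u^p(K'\ast u)\to 0$ at $\pm\infty$ (since $u\in L^1\cap L^\infty$ decays to $0$ while $K'\ast u$ stays bounded), to obtain
\begin{equation*}
\frac{d}{dt}\|u(t)\|_p^p=-p(p-1)\varepsilon\int u^{p-2}u_x^2\,dx+(p-1)\int u^p\,(K'\ast u)_x\,dx.
\end{equation*}

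\emph{Step 2 (structure of $K'$).} Using $H'=2\delta_0$ in the distributional sense, the assumed splitting $K'=-\frac{A}{2}H+V$ gives $(K'\ast u)_x=K''\ast u=-Au+V_x\ast u$, which one can verify directly from $(H\ast u)(x)=2\int_{-\infty}^x u-\|u\|_1$. Plugging this in isolates the dissipative cubic term:
\begin{equation*}
\frac{d}{dt}\|u(t)\|_p^p\le -(p-1)A\int u^{p+1}\,dx+(p-1)\int u^p(V_x\ast u)\,dx.
\end{equation*}
For the remainder I use Hölder with exponents $(p+1)/p$ and $p+1$ together with Young's convolution inequality $\|V_x\ast u\|_{p+1}\le \|V_x\|_1\|u\|_{p+1}$, which turns the last integral into at most $\|V_x\|_1\|u\|_{p+1}^{p+1}$. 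Combined with condition \eqref{K_2} this yields the clean bound
\begin{equation*}
\frac{d}{dt}\|u(t)\|_p^p\le -(p-1)\bigl(A-\|V_x\|_1\bigr)\|u(t)\|_{p+1}^{p+1}.
\end{equation*}

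\emph{Step 3 (interpolation and ODE).} I interpolate $\|u\|_p^p$ between $\|u\|_1$ and $\|u\|_{p+1}^{p+1}$: writing $u^p=u^{1/p}\cdot u^{(p^2-1)/p}$ and applying Hölder with exponents $p$ and $p/(p-1)$ gives $\|u\|_p^p\le \|u\|_1^{1/p}\|u\|_{p+1}^{(p^2-1)/p}$, hence
\begin{equation*}
\|u\|_{p+1}^{p+1}\ge \|u\|_1^{-1/(p-1)}\|u\|_p^{p^2/(p-1)}.
\end{equation*}
Substituting this into the energy inequality and using mass conservation \eqref{mass} to write $\|u(t)\|_1=\|u_0\|_1$, I obtain, for $y(t):=\|u(t)\|_p^p$,
\begin{equation*}
y'(t)\le -(p-1)\bigl(A-\|V_x\|_1\bigr)\|u_0\|_1^{-1/(p-1)}\,y(t)^{p/(p-1)}.
\end{equation*}

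\emph{Step 4 (solving the Bernoulli inequality).} The change of variables $z=y^{-1/(p-1)}$ linearises this into $z'(t)\ge \bigl(A-\|V_x\|_1\bigr)\|u_0\|_1^{-1/(p-1)}$. Dropping the nonnegative initial value $z(0)$ and integrating gives $y(t)\le \bigl(A-\|V_x\|_1\bigr)^{-(p-1)}\|u_0\|_1\,t^{-(p-1)}$, which is exactly \eqref{eq_lp_decay} after taking the $p$-th root. The cases $p=1$ and $p=\infty$ then follow, respectively, from mass conservation and from sending $p\to\infty$ in the bound.

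\emph{Expected obstacle.} The only genuinely delicate point is justifying the integration by parts rigorously and making sense of $(K'\ast u)_x=-Au+V_x\ast u$, since $K'$ is merely bounded and $K'\ast u$ does not decay at infinity. I would circumvent this either by working at the distributional level as above, or by approximating $H$ by smooth sign-like functions, proving the estimate for the regularised equation, and passing to the limit using the regularity statement from \cite{KS09} recalled in Section~\ref{sec2}. Once this is settled, Steps~2--4 are purely algebraic.
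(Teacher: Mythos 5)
Your proposal is correct and follows essentially the same route as the paper: the $L^p$ energy estimate with the diffusion term discarded, the identification $-\tfrac{A}{2}(H\ast u)_x=-Au$ (the paper's Lemma~\ref{conv_ker}) to extract the dissipative term $-(p-1)A\int u^{p+1}\dx$, the H\"older--Young bound $\|V_x\|_1\|u\|_{p+1}^{p+1}$ for the remainder, the same interpolation of $\|u\|_p^p$ against the conserved mass, and the resulting Bernoulli inequality. Your Step~4 merely makes explicit the ODE integration that the paper states without proof, and your closing remark on justifying the integration by parts is a reasonable point that the paper also leaves implicit.
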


\begin{rema}\label{remark1}
Notice that, under assumption \eqref{K_2}, we have $V(x)=\int_{-\infty}^xV_y(y)\dy{y}$. Hence, we get immediately that $V\in L^\infty(\R)\cap C(\R)$, $\lim_{|x|\to\infty}V(x)=0$, and the following estimate, $\|V\|_\infty\leq\|V_x\|_1<A$ hold true. Consequently, our assumption on the interaction kernel $K^\prime$ imply that $K^\prime+\frac{A}{2} H\in C_0(\R)$ (continuous and decaying at infinity functions). This means that the kernel $K^\prime$ has to jump at zero exactly as the rescaled sign function $-\frac{A}{2}H$ and has to converge to the constants $\pm\frac{A}{2}$ as $x\to\mp\infty$, respectively. In some sense, this means that the potential $K(x)$ looks as $-\frac{A}{2}|x|$ at $x=0$ and as $|x|\to+\infty$.
\end{rema}

\begin{rema}\label{remark2}
Our assumptions on the kernel $K^\prime(x)$ imply that interactions between particles are similar as in the chemorepulsion motion in chemotaxis phenomena, namely, when regions of high chemical concentrations have a repulsive effect on particles. Such a model was studied for example in \cite{CLM08}.
\end{rema}

In the next step of this work, we derive an asymptotic profile as $t\to\infty$ of solutions \eqref{aggr}-\eqref{aggr-ini}. First, notice that if the large time behaviour of a solution to problem \eqref{aggr}-\eqref{aggr-ini} is described by the heat kernel or the nonlinear diffusion wave (as {\it e.g.} in \cite{KS10}) then we expect the following decay rate $||u(t)||_p\leq C\ t^\frac {1-p}{2p}$ for all $t>0$. Observe, that the function $u$ from Theorem \ref{thm_lp_decay} decays faster, hence, its asymptotic behaviour as $t\to\infty$ should be different.

From now on, without loss of generality, we assume that $\int_\R u(x,t)\dx = \int_\R u_0(x)\dx=1$. Indeed, due to the conservation of mass \eqref{mass}, it suffices to replace $u$ in equation \eqref{aggr} by $\frac{u}{\int_\R u_0\dx}$ and $K^\prime$ by $K^\prime\int_\R u_0\dx$. 

Next, let us put 
\begin{equation}\label{primit}
U(x,t)=\int_{-\infty}^x u(y,t)\dy{y} -\frac{1}{2},
\end{equation}
where $u(x,t)$ is the solution of \eqref{aggr}-\eqref{aggr-ini}. Since $u=U_x$, using the explicit form of the kernel $K^\prime$ (cf. Lemma \ref{conv_ker} below), we obtain that the primitive $U=U(x,t)$ satisfy the following equation
\begin{align}\label{primitive}
U_t=\varepsilon U_{xx}-AUU_x+U_x\ V*U_x,
\end{align}
which can also be considered as a nonlinear and nonlocal perturbation of the viscous Burgers equation.

Our main result says that the large time behaviour of $U$ is described by a self-similar profile, given by a rarefaction wave, namely, the unique entropy solution of the Riemann problem for the scalar conservation law
\begin{align}
W_t^R+AW^RW^R_x &=0 \label{Riemann}\\
W^R(x,0) &=\frac{1}{2}H(x) \label{Riemann_ini}.
\end{align}
\noindent It is well-known (see {\it e.g.} \cite{evans}) that this rarefaction wave is given by the explicit formula 

\begin{equation}\label{raref}
W^R(x,t):=\left\{\begin{aligned}
-\frac{1}{2} &\qquad \textrm{for}\qquad x\leq-\frac{At}{2},\\
\frac{x}{At} &\qquad \textrm{for}\qquad -\frac{At}{2}<x<\frac{At}{2},\\
\frac{1}{2} &\qquad \textrm{for}\qquad x\geq\frac{At}{2}.
\end{aligned}
\right.
\end{equation}

\begin{theo}[Convergence towards rarefaction waves]\label{conv}
Let the assumptions of Theorem~\ref{thm_lp_decay} hold true. Assume, moreover, that a nonnegative initial datum $u_0(x)$ satisfies
\begin{align}\label{as-u0}
\int_\R u_0(x)\dx=1,\quad \text{and}\quad  \int_\R u_0(x)|x|\dx<\infty.
\end{align}
Then, there exist a constant $C>0$ independent of $\varepsilon$ such that for every $t>0$ and each $p\in(1,\infty]$ the following estimate hold true
\begin{equation}\label{conv_raref}
\|U(\cdot,t)-W^R(\cdot,t)\|_p\leq Ct^{-\frac{1}{2}\left(1-\frac{1}{p}\right)}\left(\log(2+t)\right)^{\frac{1}{2}(1+\frac{1}{p})},
\end{equation}
where $U=U(x,t)$ is the primitive of solution of problem \eqref{aggr}-\eqref{aggr-ini} given by \eqref{primit} and $W^R=W^R(x,t)$ is the rarefaction wave given by \eqref{raref}.
\end{theo}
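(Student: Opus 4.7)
The plan is to compare $U$ and $W^R$ directly via an $L^2$ energy method for $\phi:=U-W^R$, then upgrade to the full range $p\in(1,\infty]$ by interpolation with sharp Lipschitz bounds. The driving structural observation is that both $u=U_x\geq 0$ and $W^R_x\geq 0$, so the Burgers-type convection in \eqref{primitive} generates a nonnegative dissipation for $\phi$ weighted by $U_x+W^R_x$; in particular the piece
\[
\tfrac{A}{4}\int \phi^2 W^R_x\dx = \tfrac{1}{4t}\int_{|x|<At/2}\phi^2\dx
\]
fixes the $1/t$ time scale of decay of $\phi$ inside the rarefaction cone.

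Subtracting $W^R_t + AW^R W^R_x = 0$ (which holds distributionally) from \eqref{primitive} yields
\[
\phi_t = \varepsilon\phi_{xx} - \tfrac{A}{2}\bigl(\phi(U+W^R)\bigr)_x + \varepsilon W^R_{xx} + U_x (V*U_x).
\]
Multiplying by $\phi$ and integrating by parts gives the energy identity
\[
\tfrac{1}{2}\tfrac{d}{dt}\|\phi\|_2^2 + \varepsilon\|\phi_x\|_2^2 + \tfrac{A}{4}\int \phi^2(U_x+W^R_x)\dx = \varepsilon\int \phi\,W^R_{xx}\dx + \int \phi\,u\,(V*u)\dx.
\]
The singular term reduces to $\tfrac{\varepsilon}{At}\bigl(\phi(-At/2,t)-\phi(At/2,t)\bigr)=\tfrac{\varepsilon}{At}\int_{|y|>At/2}u(y,t)\dy{y}$, which is controlled by $\varepsilon/t$ times a first moment of $u$. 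For the nonlocal term I rewrite $V*u = V_x*U$ (valid since $V\in W^{1,1}(\R)$ and $V(\pm\infty)=0$, cf.\ Remark~\ref{remark1}) and split $V_x*U = V_x*\phi + V_x*W^R$: the self-interaction $\int \phi\, u\,(V_x*\phi)\dx$ is absorbed into the dissipation via $\|V_x\|_1<A$ together with the $L^p$-decay of $u$ from Theorem~\ref{thm_lp_decay}, while the forcing $\int \phi\, u\,(V_x*W^R)\dx$ is a lower-order source. A Gronwall argument on the resulting inequality produces the $L^2$ bound $\|\phi(t)\|_2^2 \leq C\,t^{-1/2}\bigl(\log(2+t)\bigr)^{3/2}$.

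The $L^\infty$ bound then follows from the elementary inequality $\|\phi\|_\infty^3 \leq C\|\phi_x\|_\infty\,\|\phi\|_2^2$ (proved by noting that $|\phi|\geq \tfrac{1}{2}\|\phi\|_\infty$ on an interval of length $\|\phi\|_\infty/(2\|\phi_x\|_\infty)$ around its extremum) combined with $\|\phi_x\|_\infty \leq \|u\|_\infty + \|W^R_x\|_\infty \leq C/t$, giving $\|\phi\|_\infty \leq Ct^{-1/2}\bigl(\log(2+t)\bigr)^{1/2}$. For $p\in[2,\infty]$ the full rate \eqref{conv_raref} follows from the interpolation $\|\phi\|_p \leq \|\phi\|_2^{2/p}\|\phi\|_\infty^{1-2/p}$, and for $p\in(1,2)$ one interpolates $L^p$ between $L^2$ and an $L^1$ bound obtained from the propagation of the first-moment assumption \eqref{as-u0}. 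The main obstacle throughout is precisely that the dissipation $\tfrac{1}{t}\int_{|x|<At/2}\phi^2\dx$ vanishes outside the rarefaction cone, where $\phi(x,t)$ reduces to a signed tail of $u$; closing the estimate there requires tracking the growth of the first moment of $u$ through \eqref{aggr}, and the logarithmic-in-$t$ growth of this moment is what generates the $\log(2+t)$ correction in the final rate.
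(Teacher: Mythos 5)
Your route---an $L^2$ energy estimate for $\phi=U-W^R$ against the \emph{exact} rarefaction wave, followed by interpolation---is genuinely different from the paper's, and the final interpolation layer (the inequality $\|\phi\|_\infty^3\leq C\|\phi_x\|_\infty\|\phi\|_2^2$ together with $\|\phi_x\|_\infty\leq Ct^{-1}$) is sound and would indeed convert the claimed $L^2$ bound into \eqref{conv_raref}. But the core energy estimate does not close as written, for two reasons. First, the dissipation $\frac{A}{4}\int\phi^2(U_x+W^R_x)\dx$ degenerates exactly where you need it: outside the fan $\{|x|<At/2\}$ one has $W^R_x=0$, and there $\phi(x,t)$ is a tail $\mp\int_x^{\pm\infty}u(y,t)\dy{y}$ of the solution, weighted by a possibly tiny $u$. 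The Gronwall step you invoke implicitly requires $\int\phi^2(U_x+W^R_x)\dx\gtrsim t^{-1}\|\phi\|_2^2$, which fails precisely on this exterior region; and your proposed remedy (the first moment of $u$) only yields $\int_\R |y|\,u(y,t)\dy{y}\leq C(1+t)$, hence $\int_{|x|>At/2}\phi^2\dx\leq Ct$ --- growth, not the decay $t^{-1/2}(\log t)^{3/2}$ you need. Controlling the mass outside the cone at rate $t^{-1/2}$ is essentially equivalent to the theorem itself, so the argument is circular at its key point. Second, the singular source $\varepsilon\int\phi\,W^R_{xx}\dx=\frac{\varepsilon}{At}\int_{|y|>At/2}u(y,t)\dy{y}$ is \emph{nonnegative} and, absent the decay just discussed, is only $O(\varepsilon/t)$; feeding that into Gronwall gives $\|\phi\|_2^2\lesssim \varepsilon$ or $\lesssim\log t$ at best, again no decay. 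The same circularity infects the nonlocal term: the ``absorption via $\|V_x\|_1<A$'' of $\int\phi\,u\,(V_x*\phi)\dx$ really produces a remainder of order $\|\phi\|_\infty^2$ or $\|\phi\|_\infty(\int\phi^2u\dx)^{1/2}$, which you cannot estimate without the conclusion. (In the paper the hypothesis $\|V_x\|_1<A$ is used only in Theorem \ref{thm_lp_decay}.) A full bootstrap/continuity argument might conceivably rescue the scheme, but it is not set up here.

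The paper sidesteps all three difficulties by changing both the comparison object and the functional framework. It compares $U$ not with $W^R$ but with the Hattori--Nishihara smooth approximation $Z$ solving \eqref{smooth_raref} (Lemma \ref{HT}), so the Dirac source $\varepsilon W^R_{xx}$ never appears; and it works in $L^1$, where multiplying the equation for $R=U-Z$ by $\sgn R$ makes the viscous term nonpositive (Kato's inequality) and the Burgers flux difference $\int(U^2-Z^2)_x\sgn R\dx$ vanish \emph{identically}, with no need for any sign or localization of the dissipation. The nonlocal term is then bounded crudely by $\|u\|_\infty\|V\|_1\|u\|_1\leq Ct^{-1}$, whose time integral is exactly the $\log(2+t)$ in Lemma \ref{bound_h}; the Gagliardo--Nirenberg inequality $\|R\|_p\leq C\|R_x\|_\infty^{\frac12(1-\frac1p)}\|R\|_1^{\frac12(1+\frac1p)}$ with $\|R_x\|_\infty\leq Ct^{-1}$ then gives all $p\in(1,\infty]$ at once, and Lemma \ref{HT}(ii) replaces $Z$ by $W^R$. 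If you want to salvage your approach, the minimal fix is to adopt these two devices --- replace $W^R$ by $Z$ and replace the $L^2$ pairing by the $\sgn$ pairing --- at which point you recover the paper's proof.
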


Next, we show that the asymptotic formula \eqref{conv_raref} holds also true for weak solutions of problem \eqref{aggr}-\eqref{aggr-ini} with $\varepsilon=0$.

\begin{theo}\label{thm_eps}
Assume that the kernel $K^\prime$ has properties stated in Theorem \ref{thm_lp_decay} and the nonnegative initial condition $u_0\in L^1(\R)$ satisfies \eqref{as-u0}. Then the initial value problem
\begin{align}
U_t&=-AUU_x+U_xV*U_x \label{eq_U}\\
U(x,0)&=U_0(x)=\int_{-\infty}^xu_0(y)\dy{y}-\frac{1}{2} \label{U_ini}
\end{align}
has a weak solution $U\in C\big{(}\R\times (0,\infty)\big{)}$ such that $U_x\in L^\infty_{loc}\big{(}(0,\infty), L^\infty(\R)\big{)}$ that satisfies problem \eqref{eq_U}-\eqref{U_ini} in the following integral sense
\begin{equation*}
-\int_0^\infty \int_\R U\varphi_t\dx\dy{t} -\int_\R U_0(x)\varphi(x,0)\dx =\frac{A}{2}\int_0^\infty\int_\R U^2\varphi_x\dx\dy{t} + \int_0^\infty\int_\R U_x\,\big{(}V_x*U\big{)} \varphi\dx\dy{t}
\end{equation*}
for all $\varphi\in C^\infty_c(\R\times[0,+\infty))$.
This solution satisfies
\begin{equation}\label{thm_eps_WR}
\|U(\cdot,t)-W^R(\cdot,t)\|_p\leq Ct^{-\frac{1}{2}\left(1-\frac{1}{p}\right)}\left(\log(2+t)\right)^{\frac{1}{2}(1+\frac{1}{p})},
\end{equation}
for a constant $C>0$, for all $t>0$, and each $p\in(1,\infty]$.\end{theo}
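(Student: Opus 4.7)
The plan is to construct $U$ as the vanishing viscosity limit of the primitives $U^\varepsilon$ associated via \eqref{primit} with the unique global solutions $u^\varepsilon$ of \eqref{aggr}-\eqref{aggr-ini}. The decisive point is that the constant $C$ in the estimate \eqref{conv_raref} of Theorem \ref{conv} does not depend on $\varepsilon$, so that once convergence of a subsequence $U^{\varepsilon_n}\to U$ in a suitable mode is obtained, the bound \eqref{thm_eps_WR} will follow by lower semicontinuity of the $L^p$-norm under a.e.\ convergence (equivalently, Fatou's lemma applied to $|U^{\varepsilon_n}-W^R|^p$).

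To build the required compactness I would first collect the following $\varepsilon$-independent bounds. Mass conservation \eqref{mass} and $u^\varepsilon\ge 0$ give $|U^\varepsilon|\le\frac12$ together with monotonicity in $x$; Theorem \ref{thm_lp_decay} at $p=\infty$ gives $\|U^\varepsilon_x(\cdot,t)\|_\infty=\|u^\varepsilon(\cdot,t)\|_\infty\le C/t$, so on any strip $[t_0,T]\times\R$ with $t_0>0$ the family is equi-Lipschitz in $x$. For time regularity I would test \eqref{primitive} against an arbitrary $\varphi\in C^\infty_c(\R)$: invoking $|U^\varepsilon|\le\frac12$, the identity $V\ast U^\varepsilon_x=V_x\ast U^\varepsilon$ with $\|V_x\ast U^\varepsilon\|_\infty\le\frac12\|V_x\|_1$, and $\|U^\varepsilon_x(\cdot,t)\|_1=1$, all three terms arising from the right-hand side of \eqref{primitive} yield $\varepsilon$-uniform control of $|\frac{d}{dt}\int U^\varepsilon\varphi\,dx|$ for every $t\ge0$. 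Combining this weak time-equicontinuity with the spatial equi-Lipschitz property, a diagonal Arzel\`a--Ascoli extraction produces a subsequence $U^{\varepsilon_n}\to U$ locally uniformly on $\R\times(0,\infty)$, with $U\in C(\R\times(0,\infty))$ and $\|U_x(\cdot,t)\|_\infty\le C/t$.

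Next I would pass to the limit in the weak formulation of \eqref{primitive}. The viscous term is $O(\varepsilon)$ and disappears; $\int(U^\varepsilon)^2\varphi_x$ converges by dominated convergence from the pointwise convergence $U^{\varepsilon_n}\to U$ and $|U^\varepsilon|\le\frac12$. For the nonlocal term $\int U^\varepsilon_x(V_x\ast U^\varepsilon)\varphi$, the locally uniform convergence $U^{\varepsilon_n}\to U$ together with $V_x\in L^1(\R)$ gives $V_x\ast U^{\varepsilon_n}\to V_x\ast U$ locally uniformly, while $U^{\varepsilon_n}_x$ converges weakly-$\star$ to $U_x$ in $L^\infty$ on every strip bounded away from $t=0$; the product passes to the limit accordingly. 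For the contribution from $[0,t_0]$ the $\varepsilon$-uniform bounds $\|U^\varepsilon_x\|_1=1$ and $\|V_x\ast U^\varepsilon\|_\infty\le\frac12\|V_x\|_1$ make each integrand of size $O(t_0)$, so a short truncation argument closes the limit, while the test against $\varphi(\cdot,0)$ yields $U(\cdot,0)=U_0$ thanks to the weak time-equicontinuity.

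The principal obstacle is the degeneration of the spatial Lipschitz bound $\|U^\varepsilon_x\|_\infty\le C/t$ as $t\to 0^+$, which prevents genuine compactness up to the initial time. I would handle this by the $t_0\to 0$ truncation indicated above, relying on the fact that although $U^\varepsilon_x$ is not uniformly $L^\infty$-bounded near $t=0$ it remains uniformly bounded in $L^1(\R)$ with unit mass, which is sufficient to kill the problematic integrals. The bound \eqref{thm_eps_WR} is then obtained directly by Fatou (or $L^\infty$-lower semicontinuity) in \eqref{conv_raref}, using that its constant is $\varepsilon$-independent.
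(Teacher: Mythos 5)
Your proposal is correct and follows essentially the same route as the paper: vanishing viscosity, $\varepsilon$-uniform bounds on $U^\varepsilon$, $U^\varepsilon_x$ and on $\partial_t U^\varepsilon$ in a weak (dual) norm, a diagonal compactness extraction giving local uniform convergence, passage to the limit in the weak formulation (with the nonlocal term handled by combining uniform convergence of $V_x\ast U^{\varepsilon_n}$ with weak-$*$ convergence of $U^{\varepsilon_n}_x$), and Fatou's lemma together with the $\varepsilon$-independence of the constant in \eqref{conv_raref} to get \eqref{thm_eps_WR}. The only cosmetic differences are that the paper invokes the Aubin--Simon theorem with $Y=W^{-1,1}([-R,R])$ where you hand-roll the Arzel\`a--Ascoli argument from weak time-equicontinuity, and your explicit truncation near $t=0$ using $\|U^\varepsilon_x(\cdot,t)\|_1=1$ is a point the paper treats more tersely.
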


Next, we use the result from Theorems \ref{conv} and \ref{thm_eps} to describe the large time asymptotics of solutions to problem \eqref{aggr}-\eqref{aggr-ini}.

\begin{coro}\label{corolary}
Let the assumptions either of Theorem \ref{thm_lp_decay} or Theorem \ref{thm_eps} hold true. For the solution $u=u(x,t)$ of problem \eqref{aggr}-\eqref{aggr-ini} with $\varepsilon\geq 0$ we define its rescaled version $u^\lambda(x,t)=\lambda u(\lambda x,\lambda t)$
for $\lambda>0$, $x\in\R$ and $t>0$. Then, for every test function $\varphi\in C^\infty_c(\R)$ and each $t_0>0$
\begin{align*}
\int_\R u^\lambda(x,t_0)\varphi(x)\dx\rightarrow -\int_\R W^R(x,t_0)\varphi_x(x)\dx \quad\text{as}\quad \lambda\to+\infty.
\end{align*}
\end{coro}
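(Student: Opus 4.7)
The plan is to recognise the corollary as a distributional reformulation of Theorems \ref{conv} and \ref{thm_eps}, combined with the scale invariance of the rarefaction wave. The explicit formula \eqref{raref} expresses $W^R(x,t)$ as a function of $x/t$ alone, so $W^R(\lambda x,\lambda t_0)=W^R(x,t_0)$ for every $\lambda>0$ and every $t_0>0$.

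Starting from the definition $u^\lambda(x,t_0)=\lambda u(\lambda x,\lambda t_0)$, the first step is the substitution $y=\lambda x$, which brings the integral to the macroscopic time $\lambda t_0$, followed by an integration by parts exploiting $u=U_x$:
\begin{align*}
\int_\R u^\lambda(x,t_0)\varphi(x)\dx
=\int_\R u(y,\lambda t_0)\varphi(y/\lambda)\dy{y}
=-\int_\R U(\lambda x,\lambda t_0)\varphi_x(x)\dx.
\end{align*}
In the case $\varepsilon=0$ this integration by parts is legitimate because Theorem \ref{thm_eps} guarantees $U_x(\cdot,t)\in L^\infty(\R)$ for each $t>0$, while $\varphi_x$ has compact support so no boundary contributions appear. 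By the scale invariance recorded above, the right-hand side of the claim equals $-\int_\R W^R(\lambda x,\lambda t_0)\varphi_x(x)\dx$ as well.

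The last step is then an $L^\infty$--$L^1$ duality bound
\begin{align*}
\left|\int_\R \bigl(U(\lambda x,\lambda t_0)-W^R(\lambda x,\lambda t_0)\bigr)\varphi_x(x)\dx\right|
\leq \|U(\cdot,\lambda t_0)-W^R(\cdot,\lambda t_0)\|_\infty\,\|\varphi_x\|_1,
\end{align*}
combined with the $p=\infty$ instance of \eqref{conv_raref} (for $\varepsilon>0$) or \eqref{thm_eps_WR} (for $\varepsilon=0$), which produces a bound of order $(\lambda t_0)^{-1/2}\bigl(\log(2+\lambda t_0)\bigr)^{1/2}\to 0$ as $\lambda\to\infty$. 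The argument has no serious obstacle: once the uniform convergence estimates of Theorems \ref{conv} and \ref{thm_eps} are in hand, the corollary follows softly from them together with the self-similarity of $W^R$ and the compactness of $\operatorname{supp}\varphi$.
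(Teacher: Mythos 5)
Your proposal is correct and follows essentially the same route as the paper: exploit the self-similarity $W^R(\lambda x,\lambda t_0)=W^R(x,t_0)$, integrate by parts to move the derivative onto the compactly supported test function, and conclude from the $p=\infty$ case of the convergence estimates \eqref{conv_raref} and \eqref{thm_eps_WR} at time $\lambda t_0$. Your explicit $L^\infty$--$L^1$ duality bound is in fact a slightly cleaner way to finish than the paper's appeal to dominated convergence, but the substance is identical.
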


In other words, for each $t_0>0$, the family of rescaled solutions $u^\lambda(x,t_0)=\lambda u(\lambda x,\lambda t_0)$ to problem \eqref{aggr}-\eqref{aggr-ini} with $\varepsilon\geq 0$ converges weakly as $\lambda\to\infty$ to the compactly supported self-similar profile defined as 
\begin{equation}
\left(W^R\right)_x(x,t_0):=\left\{\begin{aligned}
\frac{1}{At} &\qquad \textrm{for}\qquad |x|<\frac{At}{2},\\
0 &\qquad \textrm{for}\qquad |x|\geq\frac{At}{2}.
\end{aligned}
\right.
\end{equation}

%%%%%%%%%%%%%%%%%%%%%%%%%%%%%%%%%%%%%%%%%%%%%%%%%%%%%%%%%%%%%%%%%%%%%%%%%%%%%%%%%%%%%%%
\section{Large time asymptotics}
%%%%%%%%%%%%%%%%%%%%%%%%%%%%%%%%%%%%%%%%%%%%%%%%%%%%%%%%%%%%%%%%%%%%%%%%%%%%%%%%%%%%%%%

In this section, we prove all results stated in Section \ref{sec2}. We begin by an elementary result.
\begin{lemm}\label{conv_ker}
Let $H$ be the sign function. For all $\varphi\in W^{1,1}(\R)$ the following inequality hold true: $H*\varphi_x=2\varphi.$
\end{lemm}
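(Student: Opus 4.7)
The plan is to unfold the convolution using the definition of the sign function and then reduce everything to the fundamental theorem of calculus. Concretely, I would first write
\begin{equation*}
(H*\varphi_x)(x) = \int_\R H(x-y)\varphi_x(y)\dy{y} = \int_{-\infty}^x \varphi_x(y)\dy{y} - \int_x^{+\infty}\varphi_x(y)\dy{y},
\end{equation*}
using that $H(x-y)=1$ for $y<x$ and $H(x-y)=-1$ for $y>x$.

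Next, I would invoke the standard fact that any $\varphi\in W^{1,1}(\R)$ admits a continuous representative with $\lim_{x\to\pm\infty}\varphi(x)=0$. This follows from the absolute continuity of the indefinite integral $\tilde\varphi(x):=\int_{-\infty}^x \varphi_y(y)\dy{y}$ (which is well defined because $\varphi_x\in L^1(\R)$) together with $\varphi-\tilde\varphi$ being constant almost everywhere; this constant is forced to be zero by the integrability of $\varphi$ and the existence of the limits of $\tilde\varphi$ at $\pm\infty$. With this representative, the fundamental theorem of calculus gives
\begin{equation*}
\int_{-\infty}^x\varphi_x(y)\dy{y}=\varphi(x)\quad\text{and}\quad \int_x^{+\infty}\varphi_x(y)\dy{y}=-\varphi(x),
\end{equation*}
and substituting back yields $(H*\varphi_x)(x)=\varphi(x)-(-\varphi(x))=2\varphi(x)$, as desired.

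I do not expect a real obstacle here; the only point that requires a line of justification is the decay of $\varphi$ at $\pm\infty$, which is what legitimises using the fundamental theorem of calculus on the two half-lines. Note in passing that the statement of the lemma reads "inequality" but is clearly an \emph{equality}; the argument above produces equality. One could alternatively argue by approximation, first proving the identity for $\varphi\in C_c^\infty(\R)$ by the same direct calculation and then extending to $W^{1,1}(\R)$ by density, using that $\varphi\mapsto H*\varphi_x$ is continuous from $W^{1,1}(\R)$ into $L^\infty(\R)$ (since $\|H*\varphi_x\|_\infty\le \|\varphi_x\|_1$) and that $\varphi\mapsto 2\varphi$ is continuous from $W^{1,1}(\R)$ into $L^\infty(\R)$ by the Sobolev embedding $W^{1,1}(\R)\hookrightarrow L^\infty(\R)$. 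Either route is short.
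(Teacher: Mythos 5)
Your proof is correct and follows essentially the same computation as the paper: unfold the convolution, split the integral at $y=x$, and apply the fundamental theorem of calculus, with the paper handling general $\varphi\in W^{1,1}(\R)$ by the density argument you mention as an alternative. Your direct justification via the continuous representative vanishing at $\pm\infty$ is a valid (and slightly more self-contained) way to close the same gap.
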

\dowod
First, we assume that $\varphi\in C^\infty_c(\R)$. Then
\begin{align*}
H*\varphi_x=\int_\R H(x-y)\varphi_y(y)\dy{y}=\int_{-\infty}^x\varphi_y(y)\dy{y}-\int_x^\infty \varphi_y(y)\dy{y}=2\varphi(x).
\end{align*}
The proof for general $\varphi\in W^{1,1}(\R)$ is completed by a standard approximation argument.

\cbdu

Now, we are in a position to prove Theorem \ref{thm_lp_decay} concerning the decay of solution in the $L^p$-spaces.
\begin{proof}[Proof of Theorem \ref{thm_lp_decay}]
Note, that, by \eqref{mass}, we have $\|u(t)\|_1=\|u_0\|_1$ which implies \eqref{eq_lp_decay} for $p=1$. Hence, we can assume that $p>1$. 

We multiply equation \eqref{aggr} by $pu^{p-1}$ (recall that $u$ is nonnegative), integrate with respect to $x$ over $\R$, and integrate by parts to obtain

\begin{align*}
\frac{d}{dt}\int_\mathbb{R}u^p\dx=-\frac{4(p-1)\varepsilon}{p}\int_\mathbb{R} \left[\left(u^{p/2}\right)_x\right]^2\dx + (p-1)\int_\mathbb{R}u^p K'*u_x\dx.
\end{align*}
First term on the right-hand side (containing $\varepsilon>0$) is obviously nonpositive, hence, we skip it in our estimates. Using the explicit form of the kernel $K'=-\frac{A}{2}H+V$ and Lemma \ref{conv_ker}, we rewrite the second term as follows:
\begin{align}\label{est1}
(p-1)\int_\mathbb{R}u^p K'*u_x\dx=(p-1)\left(-A\int_\mathbb{R}u^{p+1}\dx+\int_\mathbb{R} u^p\ V_x*u\dx\right).
\end{align}
Notice, that a simple computation involving the H\" older and the Young inequalities leads to the estimates
\begin{equation}\label{est2}
\left|\int_\mathbb{R} u^p\ V_x*u\dx\right|\leq ||V_x*u||_{p+1}||u^p||_\frac{p+1}{p}
\leq ||V_x||_1||u||_{p+1}^{p+1}.
\end{equation}
Hence, using \eqref{est1} and \eqref{est2} we get
\begin{align}\label{pr2.1}
\frac{d}{dt}\int_\mathbb{R}u(x,t)^p\dx\leq (p-1)\left(-A+||V_x||_1\right)||u(t)||_{p+1}^{p+1}.
\end{align}

Moreover, it follows from the H\" older inequality (with the exponents $p$ and $\frac{p}{p-1}$) that
\begin{equation*}
\int_\mathbb{R}u^p\dx=\int_\mathbb{R}u^{\frac{1}{p}}\ u^{\frac{p^2-1}{p}}\dx\leq \Big{(}\int_\mathbb{R}u\dx\Big{)}^{\frac{1}{p}}\Big{(}\int_\mathbb{R}u^{p+1}\dx\Big{)}^\frac{p-1}{p},
\end{equation*}
which means
\begin{equation}\label{est3}
\int_\mathbb{R}u^{p+1}\dx\geq ||u_0||_1^\frac{-1}{p-1} \Big{(}\int_\mathbb{R}u^p\dx\Big{)}^\frac{p}{p-1},
\end{equation}
because $\|u(t)\|_1=\|u_0\|_1$.
Applying estimate \eqref{est3} to \eqref{pr2.1}, we obtain the following differential inequality for $\int_\mathbb{R}u^p\dx$:
\begin{align}\label{pr2.2}
\frac{d}{dt}\int_\mathbb{R}u(x,t)^p\dx\leq (p-1)\left(-A+||V_x||_1\right)||u_0||_1^{-\frac{1}{p-1}} \left(\int_\mathbb{R}u(x,t)^p\dx\right)^\frac{p}{p-1}.
\end{align}
It is easy to prove that any nonnegative solution of the differential inequality
\begin{align*}
\frac{d}{dt}f(t)\leq -Df(t)^{\frac{p}{p-1}},
\end{align*}
with a constant $D>0$, satisfies
\begin{align*}
f(t)\leq\left(\frac{D}{p-1}\right)^{1-p}t^{1-p}.
\end{align*}
Hence, it follows from \eqref{pr2.2} and from the assumption $\|V_x\|_1<A$ that
\begin{align}\label{pr2.3}
||u(t)||_p\leq \left(A-||V_x||_1\right)^\frac{1-p}{p} ||u_0||_1^{1/p}\ t^\frac{1-p}{p}
\end{align}
for all $t>0$.
Finally, passing to the limit $p\to\infty$ in \eqref{pr2.3} we obtain
\begin{align*}
||u(t)||_\infty\leq \left(A-||V_x||_1\right)^{-1}\ t^{-1}
\end{align*}
for all $t>0$. This completes the proof of Theorem \ref{thm_lp_decay}.
\end{proof}

Let us now recall some result on smooth approximations of rarefaction waves, more precisely, the solution of the following Cauchy problem:

\begin{equation}\label{smooth_raref}
\begin{array}{l}
Z_t-\varepsilon Z_{xx}+AZZ_x=0,\\
Z(x,0)=Z_0(x)=\frac{1}{2}H(x).
\end{array}
\end{equation}
where $A>0$.

\begin{lemm}[Hattori-Nishihara \cite{HN91}]\label{HT}
Problem \eqref{smooth_raref} has a unique, smooth, global-in-time solution $Z(x,t)$ satisfying
\begin{enumerate}[i)]
\item $-1/2<Z(x,t)<1/2$ and $Z_x(x,t)>0$ for all $(x,t)\in\R\times(0,\infty)$;
\item for every $p\in[1,\infty]$, there exists a constant $C=C(p)>0$ independent of $\varepsilon>0$ such that
\begin{equation*}
\|Z_x(t)\|_p\leq Ct^{-1+1/p}%,\qquad \|Z_{xx}(t)\|_p\leq Ct^{-3/2+1/(2p)}
\end{equation*}
and
\begin{equation*}
\|Z(t)-W^R(t)\|_p\leq Ct^{-(1-1/p)/2}
\end{equation*} \label{ineq}
\end{enumerate}
for all $t>0$, where $W^R(x,t)$ is the rarefaction wave given by formula \eqref{raref}.
\end{lemm}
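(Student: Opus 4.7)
The plan is to reduce the quasilinear Cauchy problem \eqref{smooth_raref} to the linear heat equation via the Hopf--Cole substitution, then read off the qualitative assertions in (i) from the explicit integral representation, and obtain the decay estimates in (ii) by Laplace-type asymptotic analysis.

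First, I would set $Z=-\tfrac{2\varepsilon}{A}(\log\phi)_x$. A direct computation shows $Z$ solves \eqref{smooth_raref} if and only if $\phi$ solves $\phi_t=\varepsilon\phi_{xx}$, and the initial condition $Z(\cdot,0)=\tfrac12 H$ translates to $\phi_0(x)=\exp(-\tfrac{A}{4\varepsilon}|x|)$ (up to an irrelevant multiplicative constant). The heat semigroup preserves strict positivity, so $\phi(x,t)>0$ on $\R\times(0,\infty)$; hence $Z$ is globally defined and smooth, and admits the closed form
\[
Z(x,t)=\frac{1}{2}\cdot\frac{\int_\R G(x-y,t)\,\mathrm{sgn}(y)\,\phi_0(y)\dy{y}}{\int_\R G(x-y,t)\,\phi_0(y)\dy{y}},
\]
where $G$ denotes the Gaussian kernel with diffusivity $\varepsilon$.

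Second, part (i) reads off this representation. The bound $|Z|<\tfrac12$ is immediate from $|\mathrm{sgn}(y)|\le 1$ and the positivity of $G$ and $\phi_0$. For $Z_x>0$, I would invoke preservation of log-concavity under Gaussian convolution (Pr\'ekopa--Leindler): since $\log\phi_0(x)=-\tfrac{A}{4\varepsilon}|x|$ is concave, $\log\phi(\cdot,t)$ is strictly concave for $t>0$, so $Z_x=-\tfrac{2\varepsilon}{A}(\log\phi)_{xx}>0$. Equivalently, differentiating the formula for $Z$ and symmetrising expresses the numerator of $Z_x$ as a double integral of $(y-y')(\mathrm{sgn}(y)-\mathrm{sgn}(y'))$ against positive weights, which is visibly nonnegative since $\mathrm{sgn}$ is monotone.

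Third, for the decay estimates in (ii), I would write
\[
G(x-y,t)\phi_0(y)=(4\pi\varepsilon t)^{-1/2}\exp\!\left(-\tfrac{1}{4\varepsilon}\Phi_{\xi,t}(y)\right),\qquad \Phi_{\xi,t}(y)=\tfrac{(\xi t-y)^2}{t}+A|y|,
\]
with $\xi=x/t$, and apply the Laplace method. Inside the fan $|\xi|<A/2$ the minimum of $\Phi_{\xi,t}$ lies at the corner $y=0$, and a boundary-Laplace analysis of the ratio yields $Z(x,t)=x/(At)+O((\varepsilon/t)^{1/2})$, matching $W^R$ at leading order. Outside the fan $|\xi|>A/2$ the minimum sits at $y_\ast=t(\xi\mp A/2)$, and one obtains $Z\mp\tfrac12=O(\exp(-c\,t/\varepsilon))$. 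Converting these pointwise bounds into $L^p$ estimates (with the dominant contribution coming from the boundary-layer region of width $\sqrt{\varepsilon t}$ around $\xi=\pm A/2$) gives the rates in (ii); the bound on $\|Z_x(t)\|_p$ then follows by interpolation between $\|Z_x(t)\|_1=1$ (from $Z_x>0$ and $Z(\pm\infty,t)=\pm\tfrac12$) and $\|Z_x(t)\|_\infty=O(1/t)$ obtained by differentiating the explicit formula.

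The main obstacle will be the $\varepsilon$-independence of the constant $C$ in (ii). The problem is scale-invariant under $(x,t,\varepsilon)\mapsto(\lambda x,\lambda t,\lambda\varepsilon)$, but this transformation introduces a factor $\lambda^{1/p}$ in $L^p$ norms, so it does not reduce matters to $\varepsilon=1$ outright. Instead, one must verify directly that the Laplace asymptotics above depend on $\varepsilon$ and $t$ only through the dimensionless ratio $\varepsilon/t$, which is the genuinely small parameter for large time. An alternative route, pursued in \cite{HN91}, bypasses the explicit formula: one performs a weighted energy estimate on $Z-Z^\ast$ for a suitable smooth approximation $Z^\ast$ of $W^R$, exploiting convexity of the Burgers flux to absorb the nonlinear terms; this trades fine asymptotic analysis for more delicate weighted integration by parts but automatically produces $\varepsilon$-uniform constants.
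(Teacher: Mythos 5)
Your strategy is sound, but it is doing much more work than the paper does: the paper's ``proof'' of this lemma is a two-line sketch that cites \cite{HN91} (with the refinements of \cite[sect.~3]{KT04}) for the case $\varepsilon=1$ and then removes the viscosity dependence by the scaling $f(x,t)=Z(\varepsilon x,\varepsilon t)$, which solves the $\varepsilon=1$ problem with the same Riemann datum. What you propose --- Hopf--Cole reduction to the heat equation with $\phi_0(x)=e^{-A|x|/(4\varepsilon)}$, the ratio representation of $Z$, monotonicity via preservation of log-concavity (or the symmetrized double integral against $(y-y')(\mathrm{sgn}\,y-\mathrm{sgn}\,y')$), and Laplace asymptotics inside and outside the fan --- is essentially a reconstruction of the Hattori--Nishihara argument itself, so it is a legitimate self-contained route; the computations you sketch (the initial datum for $\phi$, the location of the minimizer of $\Phi_{\xi,t}$, the leading-order ratio giving $x/(At)$) all check out, and the bound $0<Z_x\le 1/(At)$ even falls out cleanly from the standard lower bound $(\log\phi)_{xx}\ge -1/(2\varepsilon t)$ for Gaussian convolutions of log-concave data. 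The one place where you go astray is the ``main obstacle'': the scaling $(x,t,\varepsilon)\mapsto(\lambda x,\lambda t,\lambda\varepsilon)$ \emph{does} reduce matters to $\varepsilon=1$. For $\|Z_x(t)\|_p$ the Jacobian factor $\varepsilon^{(p-1)/p}$ cancels exactly against the same power coming from rescaling $t\mapsto t/\varepsilon$ in the rate $t^{-1+1/p}$, giving a constant genuinely independent of $\varepsilon$; for $\|Z-W^R\|_p$ one uses the self-similarity $W^R(\varepsilon x,\varepsilon t)=W^R(x,t)$ and the same bookkeeping produces an extra factor $\varepsilon^{(1+1/p)/2}\le 1$, harmless in the range $\varepsilon\in(0,1]$ actually used in the paper. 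So the $\varepsilon$-uniformity you worry about is a two-line scaling exercise, not a reason to redo the Laplace analysis tracking $\varepsilon/t$; if you do carry out your route in full, the part requiring genuine care is the boundary-layer matching near $\xi=\pm A/2$ when converting pointwise asymptotics into the $L^p$ rate $t^{-(1-1/p)/2}$, which is precisely the content of \cite{HN91,KT04}.
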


\par\noindent{\sc Sketch of the proof.} All results stated in Lemma \ref{HT} can be found in \cite{HN91} with some additional improvements contained in \cite[sect. 3]{KT04}, and they are deduced from an explicit formula for smooth approximation of rarefaction waves. Here however, we should emphasise that the authors of \cite{HN91} consider equation \eqref{smooth_raref} with $\varepsilon=1$ but, by a simple scaling argument, we can extend those results for all $\varepsilon>0$. Indeed, we check that the function $f(x,t)=Z(\varepsilon x,\varepsilon t)$ satisfies $f_t-f_{xx}+Aff_x=0$.
Hence, by the result from \cite{HN91} we have
\begin{align*} 
\|f_x(t)\|_p\leq Ct^{\frac{1-p}{p}}\qquad\text{and}\qquad \|f(t)-W^R(t)\|_p\leq Ct^{-(1-1/p)/2}.
\end{align*}
Now, coming back to original variables, we have
\begin{align*}
\varepsilon^{\frac{p-1}{p}}\|Z_x(\cdot,\varepsilon t)\|_p\leq C\ (\varepsilon t)^{\frac{1-p}{p}} \varepsilon^{\frac{p-1}{p}}
\end{align*}
and so, defining the new variable $\tilde{t}=\varepsilon t$, we obtain $\|Z_x(\tilde{t})\|_p\leq C\ \tilde{t}^{\frac{1-p}{p}}$
with a constant $C$ independent of $\varepsilon$. A similar reasoning should be applied in the case of the second inequality in Lemma \ref{HT}.\ref{ineq}.
\cbdu

Next, we study the large time asymptotics of $U(x,t)=\int_{-\infty}^xu(y,t)\dy{y}-\frac{1}{2}$, which satisfy equation \eqref{primit}. Recall that $u=U_x$. In the proof of Theorem \ref{conv}, we need the following auxiliary result.

\begin{lemm}\label{bound_h}
Let $u_0$ satisfy conditions \eqref{as-u0}. Assume that $U=U(x,t)$, defined by \eqref{primit}, is the solution of equation \eqref{primitive} supplemented with the initial condition $U_0(x)=\int_{-\infty}^xu_0(y)\dy{y}-1/2$ and $Z=Z(x,t)$ is the smooth approximation of the rarefaction wave, namely, the solution of problem \eqref{smooth_raref}. Then, for every $t_0>0$ we have
\begin{equation*}
\sup_{t>t_0}\frac{1}{\log(2+t)}\|U(t)-Z(t)\|_1<\infty
\end{equation*}
\end{lemm}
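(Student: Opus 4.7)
The plan is to derive an $L^1$ differential inequality for $h := U - Z$ and integrate it. Subtracting \eqref{smooth_raref} from \eqref{primitive} and using the identity $UU_x - ZZ_x = \tfrac{1}{2}\bigl((U+Z)h\bigr)_x$ gives
\begin{equation*}
h_t = \varepsilon h_{xx} - \frac{A}{2}\bigl((U+Z)h\bigr)_x + U_x\,(V*U_x).
\end{equation*}

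I would then multiply this equation by a smooth approximation of $\text{sgn}(h)$, integrate over $\R$, and pass to the limit in the usual way. The diffusion term $\varepsilon h_{xx}$ contributes nonpositively by the standard Kato-type argument, and the convective term integrates to zero because
\begin{equation*}
\bigl((U+Z)h\bigr)_x\text{sgn}(h) = \bigl((U+Z)|h|\bigr)_x,
\end{equation*}
and this total derivative has vanishing integral: $|h(x,t)|\to 0$ as $|x|\to\infty$ since $U$ and $Z$ share the same $\pm 1/2$ limits, while $U+Z$ stays bounded. One is thus left with
\begin{equation*}
\frac{d}{dt}\|h(t)\|_1 \;\leq\; \int_\R u\,|V*u|\,dx \;\leq\; \|u(t)\|_2\,\|V*u(t)\|_2 \;\leq\; \|V\|_1\,\|u(t)\|_2^2,
\end{equation*}
by Cauchy--Schwarz and Young's convolution inequality, where $u = U_x \geq 0$. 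Theorem \ref{thm_lp_decay} with $p=2$ and $\|u_0\|_1 = 1$ gives $\|u(t)\|_2^2 \leq (A-\|V_x\|_1)^{-1}\,t^{-1}$, so integrating the resulting inequality $\frac{d}{dt}\|h(t)\|_1 \leq C/t$ from $t_0$ to $t$ produces $\|h(t)\|_1 \leq \|h(t_0)\|_1 + C\log(t/t_0)$, which is the asserted logarithmic bound provided $\|h(t_0)\|_1$ is finite.

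The finiteness of $\|h(t_0)\|_1$ is obtained via the decomposition $h = (U-W^R) - (Z-W^R)$: the $L^1$ norm of the second piece is controlled by Lemma \ref{HT}\ref{ineq} at $p=1$, while a short Fubini computation shows that $\|U(t_0)-W^R(t_0)\|_1$ is dominated by the first moment $\int_\R |y|\,u(y,t_0)\,dy$, which is finite thanks to assumption \eqref{as-u0} together with standard propagation of the first moment for \eqref{aggr}. The main obstacle is the rigorous justification of the cancellation in the convective term: the product $(U+Z)|h|$ vanishes at infinity even though neither factor is individually integrable, so the sign-function regularization and the integration-by-parts step have to be carried out with some care. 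Once this cancellation is secured, the logarithmic growth falls out essentially for free from the sharp $t^{-1}$ decay of $\|u(t)\|_2^2$.
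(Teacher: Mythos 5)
Your proof is correct and follows essentially the same route as the paper: the same $L^1$ contraction argument for $h=U-Z$ via a smoothed $\sgn h$ multiplier, the same cancellation of the convective term (your $\bigl((U+Z)h\bigr)_x$ is algebraically the paper's $(R^2+2RZ)_x$), and the same $Ct^{-1}$ bound on the source term, obtained via $\|u\|_2^2$ instead of the paper's $\|u\|_1\|u\|_\infty$ but in both cases from Theorem \ref{thm_lp_decay}. The only cosmetic difference is the finiteness of $\|h(t_0)\|_1$, which you get from the first moment at time $t_0$ and Lemma \ref{HT}, whereas the paper notes directly that $U_0-Z_0\in L^1(\R)$ follows from \eqref{as-u0}.
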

\dowod
At the beginning, let us notice that assumption \eqref{as-u0} on $u_0$ imply that $U_0(x)\in L^1(-\infty,0)$ and $U_0(x)-1\in L^1(0,\infty)$. Hence, we have that $U_0-Z_0\in L^1(\R)$.

Denoting $R=U-Z$ and using equations \eqref{primitive} and \eqref{smooth_raref}, we see that this new function satisfies
\begin{equation*}
R_t=\varepsilon R_{xx}-\frac{A}{2}(U^2-Z^2)_x+U_x\ V*U_x.
\end{equation*}
We multiply this equation by $\sgn R$ (in fact, by a smooth approximation of $\sgn R$) and we integrate with respect to $x$ to obtain
\begin{align*}\label{h_lp}
\frac{d}{dt}\int_\R |R|\dx=\varepsilon\int_\R R_{xx}\sgn R\dx -\frac{A}{2}\int_\R (U^2-Z^2)_x\sgn R\dx+ 
\int_\R U_x\ V*U_x\sgn R\dx.
\end{align*}

The first term on the right-hand side of the above equation is nonpositive because this is the well-known Kato inequality. 
The second term is equal to $0$ because of the following calculations:
\begin{align*}
\int_\R (U^2-Z^2)_x\sgn R\dx&=\int_\R \left(R^2+2RZ\right)_x\sgn R\dx\\
 &=\int_\R 2R_x|R|\dx+\int_\R 2ZR_x\sgn R\dx+ \int_\R 2Z_x|R|\dx\\
&=-2\int_\R Z_x|R|\dx+2\int_\R Z_x|R|\dx=0
\end{align*}
since $\int_\R R_x|R|\dx=0$. Moreover, using the Young inequality, we have
\begin{align*}
\left|\int_\R U_x\ V*U_x\ \sgn R\dx\right|\leq \|U_x\ V*U_x\|_1\leq \|U_x\|_\infty\|V\|_1\|U_x\|_1.
\end{align*}
Hence, by the fact that $U_x(t)=u(t)$ and using the decay estimates from Theorem \ref{thm_lp_decay} for $p=1$ and $p=\infty$ we get the following differential inequality
\begin{equation*}
\frac{d}{dt}\|R(t)\|_1\leq Ct^{-1}
\end{equation*}
which completes the proof of Lemma \ref{bound_h}.
\cbdu

Now, we are in a position to prove our main result about convergence the primitive of $u$ towards a rarefaction wave.

\begin{proof}[Proof of Theorem \ref{conv}]
Let $Z=Z(x,t)$ be the smooth approximation of the rarefaction wave from Lemma \ref{HT}. Denote $R=Z-U$. Hence, by Lemma \ref{HT} and Theorem \ref{thm_lp_decay}, we have
\begin{equation*}
\|R_x(t)\|_\infty=\|U_x(t)-Z_x(t)\|_\infty\leq\|u(t)\|_\infty+\|Z_x(t)\|_\infty\leq C\ t^{-1}
\end{equation*}
for a constant $C>0$.
Moreover, using the Sobolev-Gagliardo-Nirenberg inequality
\begin{equation*}
\|R\|_p\leq C\|R_x\|^{\frac{1}{2}\left(1-\frac{1}{p}\right)}_\infty \|R\|_1^{\frac{1}{2}(1+\frac{1}{p})},
\end{equation*}
valid for every $p\in(1,\infty]$ and Lemma \ref{bound_h} we have
\begin{equation*}
\|U(t)-Z(t)\|_p\leq Ct^{-\frac{1}{2}\left(1-\frac{1}{p}\right)}\left(\log(2+t)\right)^{\frac{1}{2}(1+\frac{1}{p})}
\end{equation*}
for all $t>0$.

Finally, to complete the proof, we use Lemma \ref{HT} to replace the smooth approximation $Z(x,t)$ by the rarefaction wave $W^R(x,t)$.
\end{proof}

The proof of Theorem \ref{thm_eps} relies on a form of Aubin-Simon's compactness result that we recall below.

\begin{theo}[{\cite[Theorem 5]{Simon}}]\label{simon}
Let $X$, $B$ and $Y$ be Banach spaces satisfying $X \subset B \subset Y$
 with compact embedding $X \subset B$. Assume, for $1 \le p \le
 +\infty$ and $T > 0$, that 
\begin{itemize}
\item $F$  is bounded in  $L^p (0, T; X)$, 
\item $\{\partial_t f\,:\, f\in F\}$  is bounded in  $L^p (0, T; Y)$. 
\end{itemize}
Then $F$ is relatively compact in $L^p (0, T; B)$ {\rm (}and in $C (0, T; B)$
 if $p = +\infty${\rm )}.
\end{theo}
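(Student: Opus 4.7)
My plan is to prove Simon's compactness theorem in three coordinated steps: an interpolation lemma that couples all three spaces through the compact embedding $X \subset B$, a uniform equicontinuity-of-translates estimate in $L^p(0,T;B)$, and finally an application of a vector-valued Fr\'echet--Kolmogorov criterion.

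The first step is the Ehrling (Lions) interpolation lemma: for every $\eta>0$ there exists $C_\eta>0$ such that $\|v\|_B \le \eta\|v\|_X + C_\eta\|v\|_Y$ for all $v\in X$. I would prove this by contradiction: failure produces a sequence $v_n\in X$ with $\|v_n\|_B=1$, $\|v_n\|_X$ bounded and $\|v_n\|_Y\to 0$; the compact embedding $X\subset B$ yields a $B$-subsequential limit of norm one, while the continuous embedding $B\hookrightarrow Y$ forces that limit to vanish, a contradiction. The second step applies Ehrling pointwise in time to $f(t+h)-f(t)$ and integrates in $t$, giving for every $\eta>0$
\begin{align*}
\|f(\cdot+h)-f\|_{L^p(0,T-h;B)} \le 2\eta\,\|f\|_{L^p(0,T;X)} + C_\eta\,\|f(\cdot+h)-f\|_{L^p(0,T-h;Y)}.
\end{align*}
The $Y$-norm on the right is controlled by writing $f(t+h)-f(t)=\int_t^{t+h}\partial_sf(s)\,ds$ and combining H\"older's inequality in $s$ with Fubini in $(s,t)$, which delivers $\|f(\cdot+h)-f\|_{L^p(0,T-h;Y)}\le h\,\|\partial_tf\|_{L^p(0,T;Y)}$ when $p<\infty$ (the $p=\infty$ case reduces to a Lipschitz estimate). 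Choosing $\eta$ small relative to the uniform $L^p(0,T;X)$ bound on $F$, and then $h$ small relative to the uniform $L^p(0,T;Y)$ bound on $\{\partial_t f\}$, yields translation equicontinuity in $L^p(0,T;B)$ that is uniform over $f\in F$.

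For the final step I would invoke the vector-valued Fr\'echet--Kolmogorov criterion: a subset of $L^p(0,T;B)$ is relatively compact provided it is bounded, its time translations go to zero uniformly in the $L^p(0,T-h;B)$-norm as $h\to 0$, and for every $0\le t_1<t_2\le T$ the time averages $\{\int_{t_1}^{t_2}f(t)\,dt:f\in F\}$ form a relatively compact subset of $B$. The only remaining hypothesis to check is the last one, which is immediate here: by H\"older those averages are bounded in $X$ by $(t_2-t_1)^{1-1/p}\sup_F\|f\|_{L^p(0,T;X)}$, and the compact embedding $X\subset B$ upgrades this boundedness to precompactness. The $p=\infty$ variant is handled in parallel by Arzel\`a--Ascoli: pointwise values $\{f(t):f\in F\}$ are bounded in $X$ hence precompact in $B$, while Ehrling combined with the uniform Lipschitz estimate $\|f(t+h)-f(t)\|_Y\le h\,\|\partial_tf\|_{L^\infty(0,T;Y)}$ produces uniform $B$-equicontinuity in $t$, yielding compactness in $C(0,T;B)$.

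The main technical obstacle I foresee is justifying the Fr\'echet--Kolmogorov criterion in the $B$-valued setting: unlike the classical scalar situation, uniformly vanishing translations alone are insufficient, and the time-averaged precompactness hypothesis must be incorporated and proved separately. Once that criterion is secured, however, the two hypotheses needed to activate it fall out cleanly from Ehrling and the H\"older--Fubini bound on the $Y$-translations, so the conceptual core of the proof reduces to the interpolation inequality together with the elementary integral representation of $f(t+h)-f(t)$.
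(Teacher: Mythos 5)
The paper offers no proof of this statement: it is quoted verbatim from Simon's article (the reference {\cite[Theorem 5]{Simon}} in the bibliography), so there is no in-paper argument to compare against. Your reconstruction follows essentially the route Simon himself takes: the Ehrling--Lions interpolation inequality $\|v\|_B\le\eta\|v\|_X+C_\eta\|v\|_Y$ (Simon's Lemma~8), the transfer of translation estimates from $Y$ to $B$ via that inequality together with the identity $f(t+h)-f(t)=\int_t^{t+h}\partial_sf(s)\,\mathrm{d}s$, and the characterization of relatively compact subsets of $L^p(0,T;B)$ by uniformly vanishing translations plus precompactness in $B$ of the time averages $\int_{t_1}^{t_2}f(t)\,\mathrm{d}t$ (Simon's Theorem~1). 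The individual estimates you give are correct, including the H\"older--Fubini bound $\|f(\cdot+h)-f\|_{L^p(0,T-h;Y)}\le h\,\|\partial_tf\|_{L^p(0,T;Y)}$ and the $p=\infty$ Arzel\`a--Ascoli variant.

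The one substantive ingredient you name but do not supply is the vector-valued Fr\'echet--Kolmogorov criterion itself, and you are right to flag it: it is the main technical content of Simon's paper rather than an off-the-shelf fact. It is, however, within reach of your setup: given the uniform translation estimate, approximate each $f$ by the step function whose value on $((k-1)h,kh)$ is the average $h^{-1}\int_{(k-1)h}^{kh}f(s)\,\mathrm{d}s$; the approximation error in $L^p(0,T;B)$ is controlled by the translation modulus, and the set of such step functions lies in a finite product of the precompact average sets, hence is totally bounded. Two minor points should be made explicit to close the argument: the identity $f(t+h)-f(t)=\int_t^{t+h}\partial_sf(s)\,\mathrm{d}s$ presupposes that $f$ has an absolutely continuous $Y$-valued representative, which follows from $f,\partial_tf\in L^1(0,T;Y)$; and for $p=\infty$ the bound $\|f(t)\|_X\le M$ holds only for almost every $t$, so one should first use the $B$-continuity of $f$ (from Ehrling plus the $Y$-Lipschitz estimate) to conclude that every value $f(t)$ lies in the $B$-closure of the $X$-ball, which is compact in $B$.
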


\begin{proof}[Proof of Theorem \ref{thm_eps}]
We denote $U^\varepsilon$ as a solution of equation \eqref{primitive} with $\varepsilon>0$ supplemented with a initial condition \eqref{U_ini}.
The proof follows three steps: first we show that the family 
\begin{align*}
{\mathcal F}\equiv\{ U^\varepsilon: \varepsilon\in(0,1]\},
\end{align*}
is relative compact in $C([t_1,t_2],C[-R,R])$  for every $0<t_1<t_2<\infty$ and every $R>0$. Next, we show that there exist a function $\bar{U}=\lim_{\varepsilon\to 0}U^\varepsilon$ which is a weak solution of problem \eqref{eq_U}-\eqref{U_ini}. Finally we prove that $\bar{U}$ satisfy estimate \eqref{thm_eps_WR}.

{\it Step 1. Compactness}.
We apply Theorem \ref{simon} with $p=\infty$, $F = {\mathcal F}$, and
\[
 X = C^1([-R, R]), \qquad B = C ([-R, R]), \qquad  Y
 = W^{-1, 1}([-R, R]),
\] 
where $R > 0$ is fixed and arbitrary, and $Y$ is the dual space of $W^{1, 1}_0 ([-R, R])$. %
Obviously,  the embedding $X \subseteq B$ is compact by the Arzela-Ascoli theorem.

First, we show that the sets ${\mathcal F}$ and $ \{\partial_x U^\varepsilon: \varepsilon\in(0,1]\}$ are bounded subsets of $L^\infty \left([t_1 , t_2], C([-R, R])\right)$. Indeed, it follows from definition of function $U^\varepsilon$, namely from \eqref{primit}, that
\begin{align}\label{proof_eq1}
\left| U^\varepsilon(x,t)\right|\leq \|(U^\varepsilon)_x(\cdot, t)\|_1+\frac{1}{2}=\|u_0\|_1+\frac{1}{2}.
\end{align}
Moreover, using Theorem \ref{thm_lp_decay} we have
\begin{align}\label{bound_u}
\|(U^\varepsilon)_x(\cdot,t)\|_\infty\leq (A-\|V_x\|_1)^{-1}\ t^{-1}.
\end{align}

To check the second condition of Aubin-Simon's compactness criterion, it is suffices to show that there is a
 positive constant $C$ which independent of $\varepsilon\in(0,1]$ such that 
$
 \sup_{t \in [t_1 , t_2]} \|\partial_t U^\varepsilon \|_Y \leq C.
$
Let us show this estimate by a duality argument. 
For every $\varphi \in C^\infty_c \left((-R,
 R)\right)$ and  $t \in [t_1 , t_2]$, by \eqref{proof_eq1}, \eqref{bound_u} and Theorem \ref{thm_lp_decay}, we have 
\begin{align*}
&\left|\int_\R \partial_t U^\varepsilon (t) \varphi\, \dx \right| \leq
 \left|\int_\R \varepsilon U^\varepsilon_x(t) \varphi_x\dx\right| + \left|\int_\R AU^\varepsilon(t) U^\varepsilon_x(t)\varphi\dx\right| +
\left|\int_\R U^\varepsilon_x(t) V*U^\varepsilon_x(t)\varphi\dx\right| \\
&\leq \|\varphi_x\|_\infty \int_\R |U_x^\varepsilon(t)|\dx + 
 A\|U^\varepsilon(t)\|_\infty \|\varphi\|_\infty\int_\R \left|U^\varepsilon_x(t)\right|\dx+\|U^\varepsilon_x(t)\|_\infty^2\|V\|_1 \|\varphi\|_1 \\
&\leq \|\varphi_x\|_\infty\|u_0\|_1+ A\|u_0\|_1(\|u_0\|_1+1/2)\|\varphi\|_\infty+ (A-\|V_x\|_1)^{-2}t_1^{-2}\|V\|_1 \|\varphi\|_1.
\end{align*}
Hence, the proof of Step 1 is completed.

{\it Step 2. Limit function.}
By Step 1, for every  $0 < t_1 < t_2 < +\infty$, 
the family $\{ U^\varepsilon: \varepsilon\in(0,1]\}$ is
 relatively compact in $C([t_1 , t_2], C(-R,R))$. Consequently, by a diagonal argument, there exists a sequence of
 $\{ U^{\varepsilon_n}: \varepsilon_n\in(0,1]\}$ and a function $\bar{U}
 \in C((0, +\infty), C(\R))$ such that 
\begin{align}\label{lim_U}
U^{\varepsilon_n}\to \bar{U} \quad \text{as}\quad \varepsilon_n \to 0\quad \text{in}\quad L^\infty_{loc}\big{(}\R \times (0, +\infty)\big{)}.
\end{align}
Moreover, by the Banach-Alaoglu Theorem, it follows from the estimate \eqref{bound_u} that
\begin{align*}
U^{\varepsilon_n}_x\to\bar{U}_x  \quad\text{as}\quad \varepsilon_n\to 0
\end{align*}
weak-$*$ in $L^\infty_{loc}\big{(}(0,\infty),L^\infty(\R)\big{)}$.

Now, multiplying equation \eqref{primitive} by a test function $\varphi \in
 C^\infty_c (\R \times [0, +\infty))$ and integrating the resulting equation over $\R\times [0,\infty)$, we obtain the identity
\begin{multline}\label{weak_lim}
-\int_0^\infty\int_\R U^{\varepsilon_n}\varphi_t\dx\dy{t} - \int_\R U_0(x)\varphi(x,0)\dx = \varepsilon_n\int_0^\infty\int_\R U^{\varepsilon_n}\varphi_{xx}\dx\dy{t}
\\ +\frac{A}{2}\int_0^\infty\int_\R (U^{\varepsilon_n})^2\varphi_x\dx\dy{t} + \int_0^\infty\int_\R U^{\varepsilon_n}_x\,\big{(}V_x*U^{\varepsilon_n}\big{)}\,\varphi\dx\dy{t}
\end{multline}

It is easy to pass to the limit $\varepsilon_n\to 0$ in left-hand side of \eqref{weak_lim}, using the Lebesgue dominated convergence theorem. To deal with term in the right-hand side we make the following decomposition:
\begin{align}\label{thm_eps_eq1}
\int_\R U^{\varepsilon_n}_x\left( V_x*U^{\varepsilon_n}\right)\varphi\dx= \int_\R U^{\varepsilon_n}_x\left( V_x*(U^{\varepsilon_n}-\bar{U})\right)\varphi\dx+
\int_\R U^{\varepsilon_n}_x\left( V_x*\bar{U}\right)\varphi\dx.
\end{align}
We can estimate the first term on the right-hand side of \eqref{thm_eps_eq1} as follows:
\begin{align}\label{thm_eps_eq2}
\left|\int_\R U^{\varepsilon_n}_x\left( 
V_x*(U^{\varepsilon_n}-\bar{U})\right)\varphi\dx\right|\leq \|U^{\varepsilon_n}_x(t)\|_\infty \int_\R \left|V_x*(U^{\varepsilon_n}-\bar{U})\varphi\right|\dx
\end{align}
Let us notice, that $V_x*(U^{\varepsilon_n}-\bar{U})$ tends to zero as $\varepsilon_n\to 0$ by Lebesgue dominated convergence theorem and it is bounded independently of $\varepsilon_n$. Hence, using the Lebesgue dominated convergence theorem and Theorem \ref{thm_lp_decay}, we deduce that the right-hand side of \eqref{thm_eps_eq2} converge to zero. The second term on the right-hand side of \eqref{thm_eps_eq1} obviously converge to $\int_\R \bar{U}_x\left( V_x*\bar{U}\right)\varphi\dx$ by the weakly-$*$ convergence of $U^{\varepsilon_n}_x$ in $L^\infty(\R)$ since $(V_x*\bar{U})\varphi\in L^1(\R)$. This completes the proof of Step 2.

{\it Step 3. Convergence towards rarefaction wave.}
To prove \eqref{thm_eps_WR}, we use the Fatou Lemma and \eqref{lim_U}, to obtain
\begin{align*}
\|\bar{U}(t)-W^R(t)\|_p\leq \liminf_{\varepsilon_n\to 0} \|U^{\varepsilon_n}(t)-W^R(t)\|_p
\end{align*}
for all $t>0$.

Now, it is enough to use Theorem \ref{conv} to estimate the quantity on right-hand side, since constant $C$ in \eqref{conv_raref} is independent of $\varepsilon$. Hence the proof of Theorem \ref{thm_eps} is finished.
\end{proof}

At last, we prove Corollary \ref{corolary}.

\begin{proof}[Proof of Corollary \ref{corolary}]
First, we express the result stated in Theorems \ref{conv} and \ref{thm_eps} in another way. We consider the rescaled family of function
$
U^\lambda (x,t)=U(\lambda x,\lambda t)
$
for all $\lambda>0$. Let us also notice that $W^R(x,t)$ is self-similar in the sense that $\left(W^R\right)^\lambda(x,t)=W^R(x,t)$ for all $x\in\R$, $t>0$, $\lambda>0$. Hence, changing the variables and using Theorem~\ref{conv} and Theorem \ref{thm_eps} for the case $\varepsilon=0$, we obtain
\begin{multline*}
\|U^\lambda(\cdot,t_0)-\left(W^R\right)^\lambda(\cdot,t_0)\|_p=\lambda^{-1/p}\|U(\cdot,\lambda t_0)-W^R(\cdot,\lambda t_0)\|_p\leq\\
C\lambda^{-1/p}(\lambda t_0)^{-\frac{1}{2}\left(1-\frac{1}{p}\right)}\left( \log(2+\lambda t_0)\right)^{\frac{1}{2}\left(1+\frac{1}{p}\right)}\to 0
\end{multline*}
as $\lambda\to\infty$. It means that the family of functions $U^\lambda$ converge in $L^p(\R)$ as $\lambda\to\infty$ towards $W^R(x,t)$ for every $t_0>0$ and $p\in(1,\infty]$.

This scaling argument allows us to express the convergence of solutions to original problem \eqref{aggr}-\eqref{aggr-ini} towards a self-similar profile.
Indeed, let us note that since $u=U_x$, it follows immediately that $u^\lambda(x,t)=\lambda u(\lambda x,\lambda t)=\partial_x U^\lambda(x,t)$. Hence, the weak convergence of $u^\lambda$ towards the distributional derivative of the rarefaction wave $\partial_x W^R$ is the immediate consequence of the Lebesgue dominated convergence theorem and of Theorem \ref{conv} for $p=\infty$ since $|U^\lambda(x,t_0)|\leq \int_\R u_0(x)\dx+\frac{1}{2}$
\end{proof}

\end{document}